\documentclass[ngerman,american]{article}
\usepackage[T1]{fontenc}
\usepackage[utf8]{luainputenc}
\usepackage{amsthm}
\usepackage{amsmath}
\usepackage{amssymb}

\makeatletter

\newcommand{\noun}[1]{\textsc{#1}}

\theoremstyle{plain}
\newtheorem{thm}{\protect\theoremname}
  \theoremstyle{plain}
  \newtheorem{lem}[thm]{\protect\lemmaname}
  \theoremstyle{plain}
  \newtheorem{cor}[thm]{\protect\corollaryname}
  \theoremstyle{plain}
  \newtheorem{algorithm}[thm]{\protect\algorithmname}
  \theoremstyle{remark}
  \newtheorem*{acknowledgement*}{\protect\acknowledgementname}

\makeatother

\usepackage{babel}
  \addto\captionsamerican{\renewcommand{\acknowledgementname}{Acknowledgement}}
  \addto\captionsamerican{\renewcommand{\algorithmname}{Algorithm}}
  \addto\captionsamerican{\renewcommand{\corollaryname}{Corollary}}
  \addto\captionsamerican{\renewcommand{\lemmaname}{Lemma}}
  \addto\captionsamerican{\renewcommand{\theoremname}{Theorem}}
  \addto\captionsngerman{\renewcommand{\acknowledgementname}{Danksagung}}
  \addto\captionsngerman{\renewcommand{\algorithmname}{Algorithmus}}
  \addto\captionsngerman{\renewcommand{\corollaryname}{Korollar}}
  \addto\captionsngerman{\renewcommand{\lemmaname}{Lemma}}
  \addto\captionsngerman{\renewcommand{\theoremname}{Theorem}}
  \providecommand{\acknowledgementname}{Acknowledgement}
  \providecommand{\algorithmname}{Algorithm}
  \providecommand{\corollaryname}{Corollary}
  \providecommand{\lemmaname}{Lemma}
\providecommand{\theoremname}{Theorem}

\begin{document}

\title{An Algorithmic Approach to Pick's Theorem}

\author{Haim Shraga Rosner}
\maketitle
\begin{abstract}
We give an algorithmic proof of Pick's theorem which calculates the
area of a lattice-polygon in terms of the lattice-points.
\end{abstract}

\paragraph{Introduction}

Pick's Theorem was first discovered by Georg A\@. Pick in 1899 \cite{Pick}.
Many different proofs for this elegant theorem have been published
over the last 60 years. Some found a topological connection with Euler's
formula, and others, like Pick himself, proved it by geometrical means.
Most of the geometric proofs prove the additivity of Pick's formula
and find a specific example for which this formula gives the area.
Both Liu and Varberg \cite{key-19,Varberg} mentioned that the most
challenging part of some proofs is the fact that a primitive lattice-triangle
is of area $\frac{1}{2}$. Varberg, for example, bypasses that fact
in his proof. Here we do use this fact and find an explicit algorithm
to find all lattice points for a lattice-polygon $P$.
\begin{thm}
\emph{\label{thm:Pick}(Pick, \cite{Pick})}. Let $P$ be a lattice-polygon.
Then its area is $i+\frac{u}{2}-1$, when $i$ is the number of interior
lattice-points of $P$ and $u$ is the number of its boundary lattice-points.\end{thm}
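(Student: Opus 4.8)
The plan is to show that the \emph{Pick value} $\mathrm{Pick}(P):=i+\tfrac{u}{2}-1$ coincides with the area of $P$, by dissecting $P$ step by step into \emph{primitive} lattice-triangles (those whose only lattice-points are their three vertices) and verifying the identity on such a triangle.

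The first ingredient is an additivity lemma: if a lattice-polygon $P$ is cut along a lattice-polygonal path into two lattice-polygons $P_{1},P_{2}$, then $\mathrm{Pick}(P)=\mathrm{Pick}(P_{1})+\mathrm{Pick}(P_{2})$. I would prove this by bookkeeping: writing $k$ for the number of lattice-points on the cut, its $k-2$ interior points become interior points of $P$ while its $2$ endpoints remain on $\partial P$, which gives $i(P)=i(P_{1})+i(P_{2})+(k-2)$ and $u(P)=u(P_{1})+u(P_{2})-2(k-2)-2$; after substituting into $\mathrm{Pick}$, the $k$-dependent terms cancel and the leftover constant $-2$ is precisely $(-1)+(-1)$. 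Since ordinary area is additive in exactly the same situation, from now on it suffices to dissect $P$ into primitive triangles and to prove the theorem for one primitive triangle.

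The dissection proceeds in two stages. Every simple polygon with at least four vertices has a diagonal joining two of its vertices, so by induction on the number of vertices $P$ decomposes into lattice-triangles whose vertices are among those of $P$. Next, whenever a lattice-triangle $T$ carries a lattice-point $v$ other than its three vertices, I subdivide it: if $v$ is interior, join $v$ to all three vertices; if $v$ lies on an edge, join $v$ to the opposite vertex. Each resulting piece is a lattice-triangle that omits at least one vertex of $T$ and hence contains strictly fewer lattice-points, so iterating the step terminates with a dissection of $T$ --- and therefore of $P$ --- into primitive triangles. Carrying this out requires, at each stage, either exhibiting such a point $v$ or certifying that none exists; making this effective is the explicit lattice-point search promised in the introduction and is the algorithmic heart of the argument.

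Finally, a primitive triangle has Pick value $0+\tfrac{3}{2}-1=\tfrac{1}{2}$, so it remains to show that its area is $\tfrac{1}{2}$. Translating one vertex to the origin and letting $v_{1},v_{2}\in\mathbb{Z}^{2}$ be the two edge-vectors issuing from it, the area equals $\tfrac{1}{2}\lvert\det(v_{1},v_{2})\rvert$, so I must prove that $v_{1},v_{2}$ form a $\mathbb{Z}$-basis. For this I would check that the half-open parallelogram $\{sv_{1}+tv_{2}:0\le s,t<1\}$ contains no lattice-point except the origin: if $p=sv_{1}+tv_{2}$ is a lattice-point with $s+t\le 1$, it lies in the closed primitive triangle with vertices $0,v_{1},v_{2}$ and so must be one of these vertices, which $0\le s,t<1$ forbids unless $p=0$; and if $s+t>1$, then $v_{1}+v_{2}-p=(1-s)v_{1}+(1-t)v_{2}$ is a lattice-point of that same triangle with both coefficients positive and summing to less than $1$, again forced to be a vertex, which is impossible. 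Hence every residue class of $\mathbb{Z}^{2}$ modulo $\mathbb{Z}v_{1}+\mathbb{Z}v_{2}$ is represented in that parallelogram, so $\mathbb{Z}v_{1}+\mathbb{Z}v_{2}=\mathbb{Z}^{2}$ and $\lvert\det(v_{1},v_{2})\rvert=1$. As Liu and Varberg remark, this last fact --- the area of a primitive triangle --- is the genuine obstacle; everything else is induction and careful counting.
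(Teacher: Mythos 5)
Your argument is correct, and its skeleton --- additivity of $i+\frac{u}{2}-1$ under cutting along a lattice path, dissection into primitive triangles, and evaluation of both area and Pick value on a primitive triangle --- is the same as the paper's. Where you genuinely diverge is in the one hard step, that a primitive triangle has area $\frac{1}{2}$. You prove it directly: the half-open parallelogram spanned by the edge vectors $v_{1},v_{2}$ contains no lattice point but the origin, hence $v_{1},v_{2}$ generate $\mathbb{Z}^{2}$ and $\left|\det\left(v_{1},v_{2}\right)\right|=1$. The paper instead proves the contrapositive constructively (Theorem \ref{thm:minimal iff half}): given a triangle of area $\frac{n}{2}$ with $n>1$, it exhibits an extra lattice point, either $\frac{k-1}{k}A+\frac{1}{k}B$ when $k=\gcd\left(a-b,c-d\right)>1$, or else a point on the segment $\frac{n-1}{n}AB$ produced by a B\'{e}zout identity; it then needs the separate Lemma \ref{lem:minimal-area} (every lattice triangle has area an integer multiple of $\frac{1}{2}$) to conclude that a triangle admitting no further subdivision has area exactly $\frac{1}{2}$. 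Your fundamental-parallelogram argument is shorter, self-contained, and dispenses with Lemma \ref{lem:minimal-area}, but it is purely an existence proof, whereas the paper's stated purpose is the explicit algorithm --- actually computing the subdividing point at each stage --- which you acknowledge but defer. Your termination argument (each sub-triangle contains strictly fewer lattice points) also differs from the paper's (the number of minimal triangles equals twice the area); both are valid, though yours has the small advantage of not presupposing that areas are quantized in units of $\frac{1}{2}$.
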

\begin{lem}
\label{lem:minimal-area}The minimal possible area of a triangle whose
vertices are all lattice-points is~$\frac{1}{2}$.\end{lem}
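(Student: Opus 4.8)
The plan is to work in coordinates and reduce the area to a determinant. First I would translate the triangle so that one of its three vertices lies at the origin; translation is an isometry, so it preserves area, and since the translation vector is itself a lattice point, all three vertices remain in $\mathbb{Z}^{2}$. Write the two remaining vertices as $v=(a,b)$ and $w=(c,d)$ with $a,b,c,d\in\mathbb{Z}$.

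Next I would invoke the standard formula for the area of the triangle with vertices $0,v,w$, namely $\tfrac12\,\lvert ad-bc\rvert$. The point is then purely arithmetic: $ad-bc$ is an integer, because $a,b,c,d$ are integers; and $ad-bc\neq 0$, because the three vertices of an honest (non-degenerate) triangle are not collinear, so $v$ and $w$ are linearly independent. Hence $\lvert ad-bc\rvert\ge 1$, and the area is at least $\tfrac12$.

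Finally I would check that the bound is attained, e.g.\ by the triangle with vertices $(0,0)$, $(1,0)$, $(0,1)$, whose area is exactly $\tfrac12$. Combining the two parts gives that $\tfrac12$ is the minimum.

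The only step that needs genuine justification — the ``hard part,'' such as it is — is the area formula $\tfrac12\lvert ad-bc\rvert$ itself; depending on what the paper is willing to take for granted, this is either quoted as a basic fact of analytic geometry or derived by completing the parallelogram spanned by $v$ and $w$, computing its area as base times height (or by a shoelace computation), and halving. Everything after that is immediate from the integrality of the determinant.
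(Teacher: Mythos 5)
Your argument is correct and coincides with the paper's second (``Alternative'') proof of the lemma: translating one vertex to the origin and reading off the area as $\tfrac12\lvert ad-bc\rvert$ is exactly the $3\times 3$ determinant formula quoted there, and the conclusion follows from integrality of the determinant. Your explicit witness $(0,0),(1,0),(0,1)$ showing the bound is attained is a small but welcome addition that the paper leaves implicit.
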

\begin{proof}
\cite{key-19} Let $A,B,C$ be lattice-points, and denote $\triangle ABC$
by $T$. We can bind $T$ with a rectangle parallel to the axes. In
order to calculate the area of $T$, subtract the area outside it
from the rectangle. That area consists of several right triangles
and may also include a rectangle. The area of each right triangle
is half the product of its legs, which are natural numbers, and thus
is a multiple of $\frac{1}{2}$. The area of both the big and the
small rectangles are natural numbers. Therefore, the area of $T$
is a multiple of $\frac{1}{2}$. We have found that the minimum positive
area of $T$ is~$\frac{1}{2}$.

\noindent \emph{Alternative Proof}. \cite{key-13,Honsberger,EulerII,Minkow}
Let $A,B,C$ be lattice-points. The area of $\triangle ABC$ is $\left.\frac{1}{2}\cdot\left|\det\begin{pmatrix}x_{A} & x_{B} & x_{C}\\
y_{A} & y_{B} & y_{C}\\
1 & 1 & 1
\end{pmatrix}\right|\right.$, which is an integer multiple of~$\frac{1}{2}$.\end{proof}
\begin{thm}
\label{thm:minimal iff half}Let $A,B,C$ be lattice-points. The triangle
$\triangle ABC$ is of minimal area, i.e., $\frac{1}{2}$, iff $\triangle ABC\cap\mathbb{Z}^{2}=\left\{ A,B,C\right\} $,
i\@.e\@., iff there are no lattice-points on the edges of $\triangle ABC$,
nor in the interior of $\triangle ABC$.\end{thm}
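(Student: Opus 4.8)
The statement is an ``iff'', and the two directions are of very different character: one is a short counting argument from Lemma~\ref{lem:minimal-area}, the other carries the real content. I would first record that the theorem should be read for a \emph{non-degenerate} $\triangle ABC$ (three collinear lattice points with none strictly between the two extreme ones give a degenerate ``triangle'' of area $0\neq\tfrac12$ whose only lattice points are $A,B,C$), so throughout I assume $A,B,C$ are not collinear; this is exactly what makes all triangle areas appearing below strictly positive.

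For the implication ``$\operatorname{Area}(\triangle ABC)=\tfrac12 \Rightarrow \triangle ABC\cap\mathbb{Z}^2=\{A,B,C\}$'' I argue by contradiction. Suppose $D\in\triangle ABC\cap\mathbb{Z}^2$ with $D\notin\{A,B,C\}$. If $D$ is an interior point, then $\triangle ABD$, $\triangle BCD$, $\triangle CAD$ are non-degenerate lattice triangles whose areas add up to $\operatorname{Area}(\triangle ABC)$; by Lemma~\ref{lem:minimal-area} each is $\geq\tfrac12$, forcing $\operatorname{Area}(\triangle ABC)\geq\tfrac32$, a contradiction. If instead $D$ lies in the relative interior of an edge, say $AB$, then $\triangle ACD$ and $\triangle BCD$ are non-degenerate lattice triangles with areas summing to $\operatorname{Area}(\triangle ABC)$ and each $\geq\tfrac12$, so $\operatorname{Area}(\triangle ABC)\geq1$, again a contradiction.

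For the reverse implication I translate so that $A$ is the origin and put $u=B$, $v=C$, linearly independent vectors in $\mathbb{Z}^2$. The plan is to show the half-open parallelogram $\Pi=\{su+tv:\ 0\le s,t<1\}$ contains no lattice point besides the origin. Take a lattice point $w=su+tv\in\Pi$. If $s+t\le1$, then $w$ lies in the closed triangle $\triangle(0,u,v)$, so by hypothesis $w\in\{0,u,v\}$, and $s,t<1$ forces $w=0$. If $s+t>1$, then necessarily $s,t>0$, and the point-reflection $w'=u+v-w=(1-s)u+(1-t)v$ through the midpoint of $uv$ is again a lattice point, now with coefficients in $(0,1)$ summing to $2-(s+t)<1$, i.e.\ $w'$ is an \emph{interior} lattice point of $\triangle(0,u,v)$ — impossible. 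Hence $\Pi\cap\mathbb{Z}^2=\{0\}$. I then invoke the elementary tiling fact: the $\Lambda$-translates of $\Pi$, where $\Lambda=\mathbb{Z}u+\mathbb{Z}v$, partition the plane, so each lattice point lies in a unique translate $z+\Pi$ with $z\in\Lambda\subseteq\mathbb{Z}^2$ and must equal $z$; thus $\mathbb{Z}^2=\Lambda$, so $u,v$ is a basis of $\mathbb{Z}^2$, $|\det(u,v)|=1$, and $\operatorname{Area}(\triangle ABC)=\tfrac12|\det(u,v)|=\tfrac12$.

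The one genuinely delicate step is the reflection in the reverse direction: the hypothesis only controls lattice points of the triangle $\triangle(0,u,v)$, which is half of $\Pi$, so one must push that information across to the other half, and the map $w\mapsto u+v-w$ does exactly this (sending the ``far'' half-open triangle onto the interior of the ``near'' one). Everything else is bookkeeping: the counting in the first direction is immediate from Lemma~\ref{lem:minimal-area}, and the closing tiling argument is just the standard observation that a half-open fundamental parallelogram of a sublattice $\Lambda\subseteq\mathbb{Z}^2$ meets $\mathbb{Z}^2$ in exactly $[\mathbb{Z}^2:\Lambda]=|\det|$ points, which here equals $1$.
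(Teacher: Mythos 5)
Your proof is correct, but the hard direction goes by a genuinely different route than the paper's. For $(\Rightarrow)$ you and the paper do the same thing: decompose at the extra point $D$ and apply Lemma \ref{lem:minimal-area} to each piece. For $(\Leftarrow)$, however, the paper argues the contrapositive \emph{constructively}: setting $C$ at the origin, $A=(a,c)$, $B=(b,d)$, $n=ad-bc>1$, it either finds a point on the edge $AB$ when $\gcd(a-b,c-d)>1$, or else uses B\'ezout's identity to exhibit an explicit lattice point $D$ on the scaled segment $\frac{n-1}{n}AB$. Your argument instead shows directly that the half-open parallelogram $\Pi$ spanned by $u=B-A$, $v=C-A$ meets $\mathbb{Z}^{2}$ only at the origin (the point-reflection $w\mapsto u+v-w$ carrying the far half of $\Pi$ into the interior of the triangle is handled correctly, and your non-degeneracy caveat is appropriate), whence $\mathbb{Z}u+\mathbb{Z}v=\mathbb{Z}^{2}$ and $|\det(u,v)|=1$. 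This is essentially the fundamental-parallelogram argument of \cite[Theorem 34]{HardyWright}, which the paper explicitly mentions as an equivalent statement just after the theorem. What your route buys is brevity and conceptual clarity, avoiding the delicate interval bookkeeping in the paper's choice of the B\'ezout parameter $i$; what it loses is the explicit location of the new lattice point $D$ in the segment $\frac{n-1}{n}AB$, which the paper needs again in its Corollary and in step 5 of its triangulation algorithm. So your proof suffices for the theorem as stated, but it would not by itself support the algorithmic program of the paper.
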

\begin{proof}
$\left(\Longrightarrow\right)$. We prove the contrapositive. Assume
that there is another lattice-point $D$ in that intersection. If
$D$ is an interior point, then we can decompose the triangle into
three triangles: $\triangle ABD$, $\triangle ACD$ and $\triangle BCD$.
If $D$ is on an edge of $\triangle ABC$, then we can decompose it
into two triangles by drawing a line between $D$ and the opposite
vertex. In any case, $\triangle ABC$ contains several disjoint sub-triangles,
and thus its area is at least twice as much as the minimum. We conclude
that if $\triangle ABC$ has such a point $D$, then its area is not
minimal.

$\left(\Longleftarrow\right)$. Denote $\triangle ABC$ by $T$. Move
the point $C$ to the origin $\left(0,0\right)$, and denote the other
points as $A=\left(a,c\right)$ and $B=\left(b,d\right)$. The area
of $T$ is $\frac{\left|ad-bc\right|}{2}$. Since $T$ is a triangle,
its area is non-zero. W.l.o.g. we assume that $ad-bc>0$ and $c\le d$.
Denoting $n=ad-bc$, we want to prove that if $n>1$ then there is
another lattice-point $D$ in $T$. If $\gcd\left(a-b,c-d\right)=k>1$,
then the point $\frac{k-1}{k}A+\frac{1}{k}B$ is a new lattice-point
on the edge $AB$, as desired. Thus we assume that $\gcd\left(a-b,c-d\right)=1$.
We prove that such a lattice-point exists on the segment $\frac{n-1}{n}AB$.

The equation of this segment is $\left(a-b\right)y-\left(c-d\right)x=n-1$.
Since $a-b$ and $c-d$ are coprime, there exist $s$ and $t$ such
that $\left(a-b\right)s-\left(c-d\right)t=1$. We multiply this equation
by $n-1$ and get $\left(a-b\right)\left(n-1\right)s-\left(c-d\right)\left(n-1\right)t=n-1$.
Thus, we take $x=\left(n-1\right)t$ and $y=\left(n-1\right)s$, to
find a lattice-point on that line. However, we need to find a lattice
point\emph{ }not only on that \emph{line} but on the \emph{segment}
$\frac{n-1}{n}AB$. Thus, we replace $x$ by $x+\left(a-b\right)i$
and $y$ by $y+\left(c-d\right)i$, to get a new lattice-point on
the line. For all $r\in\mathbb{R},$ we can choose an appropriate
$i$ such that $r\le y<r-\left(c-d\right)$. We choose the appropriate
$i$ for $r=\frac{n-1}{n}c$, i.e., $c-\frac{c}{n}\le y<d-\frac{c}{n}$.
Denote this point by $D=\left(x,y\right)$. We claim that this $D$
is in the segment, i.e., $\frac{n-1}{n}c\le y\le\frac{n-1}{n}d$,
and consequently, $\frac{n-1}{n}a\le x\le\frac{n-1}{n}b$ as well
(or $\frac{n-1}{n}a\ge x\ge\frac{n-1}{n}b$, if $a\ge b$). We need
to demonstrate that $D$ does not fall past $\frac{n-1}{n}B$, i.e.,
that $y\notin\left(d-\frac{d}{n},d-\frac{c}{n}\right)$. If $D$ were
past $B$, then $nD$ would be a lattice-point on the line $\left(n-1\right)AB$,
past the lattice-point $\left(n-1\right)B$. But the $y$-difference
between these two points would be $ny-\left(n-1\right)d$. In accordance
with $d-\frac{d}{n}<y<d-\frac{c}{n}$, we find that $0<ny-\left(n-1\right)d<d-c$.
This $y$-difference between lattice-points on a line with slope $\frac{c-d}{a-b}$
contradicts the fact that $a-b$ and $c-d$ are coprime. In conclusion,
$D$ is in the segment $\frac{n-1}{n}AB$.

We have found another lattice-point $D$ in the triangle $T$ of area
greater than~$\frac{1}{2}$.

\end{proof}
The above theorem is of course equivalent to \cite[Theorem 34]{HardyWright},
which overlooked parallelograms instead of triangles, although they
neglected the case in which there are two (or more) points on the
diagonal $PQ$. Moreover, we want to mention this connection as evidence
to the deep connection between Pick's theorem and Farey series. We
use some concepts that appear there in §3\@.4-3\@.7.
\begin{cor}
For a lattice-triangle $\triangle ABC$, with $A=\left(a,c\right)$,
$B=\left(b,d\right)$ and $C=\left(0,0\right)$, if $a-b$ and $c-d$
are coprime, than there is one lattice point in 
\begin{equation}
X=\left\{ \frac{n-i}{n}A+\frac{i-1}{n}B\colon i=1,\ldots,n\right\} ,\label{eq:SegPts}
\end{equation}
for $n=\left|ad-bc\right|$.\end{cor}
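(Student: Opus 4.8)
The plan is to reduce the assertion to an elementary linear congruence in the index $i$. Write $p(i)=\frac{n-i}{n}A+\frac{i-1}{n}B$, so that $X=\{p(1),\dots,p(n)\}$; geometrically these are $n$ evenly spaced points on the segment $\frac{n-1}{n}AB$ — the very segment produced in the proof of Theorem~\ref{thm:minimal iff half}. We may assume $ad-bc=n>0$ (swapping the roles of $A$ and $B$ leaves $X$ unchanged and flips this sign). Since $A\in\mathbb{Z}^{2}$, we have $p(i)\in\mathbb{Z}^{2}$ if and only if $p(i)-A=\frac{(i-1)B-iA}{n}\in\mathbb{Z}^{2}$, that is, if and only if
\[
  iA-(i-1)B\in n\mathbb{Z}^{2}.
\]
I would first record this equivalence and then treat the uniqueness and the existence of a solution $i\in\{1,\dots,n\}$ separately.

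\emph{Uniqueness.} The homogeneous version of the condition reads $i\,(a-b,\,c-d)\in n\mathbb{Z}^{2}$, i.e.\ $n\mid i(a-b)$ and $n\mid i(c-d)$; since $\gcd(a-b,c-d)=1$, choosing $x,y$ with $(a-b)x+(c-d)y=1$ yields $n\mid i\bigl((a-b)x+(c-d)y\bigr)=i$, and conversely $n\mid i$ clearly suffices. Hence any two solutions of the inhomogeneous condition differ by a multiple of $n$, so the solution set is either empty or a single residue class modulo $n$; in particular at most one $i\in\{1,\dots,n\}$ gives $p(i)\in\mathbb{Z}^{2}$, i.e.\ $X$ contains at most one lattice point. (For $n=1$ this is vacuous, as $X=\{p(1)\}=\{C\}$.)

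\emph{Existence.} Here is where the actual work lies. By B\'ezout choose $\mu,\nu\in\mathbb{Z}$ with $\nu(a-b)-\mu(c-d)=1$ and set $i=\mu d-\nu b$; then $i-1=\mu c-\nu a$, and using $dA-cB=(ad-bc,0)=(n,0)$ together with $aB-bA=(0,ad-bc)=(0,n)$ one computes
\[
  iA-(i-1)B=\mu(dA-cB)+\nu(aB-bA)=(\mu n,\nu n)\in n\mathbb{Z}^{2},
\]
so $p(i)=A-(\mu,\nu)\in\mathbb{Z}^{2}$. Translating this $i$ by a multiple of $n$ puts it in $\{1,\dots,n\}$, and by the uniqueness step that translate is the sole lattice point of $X$. (Existence could alternatively be quoted from the $(\Longleftarrow)$ direction of Theorem~\ref{thm:minimal iff half}, which already exhibits a lattice point on the segment $\frac{n-1}{n}AB$; but one still has to check that it is one of the $n$ marked points $p(i)$, which is precisely what the displayed congruence controls.) The one delicate point is to arrange the B\'ezout data so that a single pair $(\mu,\nu)$ simultaneously handles both coordinates; the identity $iA-(i-1)B=(\mu n,\nu n)$ is exactly what makes that work.
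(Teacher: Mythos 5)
Your proof is correct, but it takes a genuinely different route from the paper's. The paper scales everything by $n$: it observes that $nX=\left\{ (n-i)A+(i-1)B\colon i=1,\ldots,n\right\}$ is precisely the set of all $n$ lattice points on the segment from $(n-1)A$ to $(n-1)B$ (this is where $\gcd(a-b,c-d)=1$ enters), and then imports the lattice point $D$ constructed at the end of the proof of Theorem \ref{thm:minimal iff half}, which lies on $\frac{n-1}{n}AB$; hence $nD\in nX$ and so $D\in X$. You instead characterize $p(i)\in\mathbb{Z}^{2}$ by the congruence $iA-(i-1)B\in n\mathbb{Z}^{2}$ and solve it outright with a B\'ezout pair, using the identities $dA-cB=(n,0)$ and $aB-bA=(0,n)$; your computation checks out (with $i=\mu d-\nu b$ one does get $i-1=\mu c-\nu a$ and $iA-(i-1)B=(\mu n,\nu n)$), and reducing the homogeneous condition to $n\mid i$ via coprimality is exactly the right way to get both the period-$n$ structure and uniqueness. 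Your version buys two things: it is self-contained, not leaning on the point $D$ whose construction in Theorem \ref{thm:minimal iff half} is the most delicate part of the paper, and it actually proves the ``one and only'' clause that step 5 of the Algorithm later relies on, whereas the paper's proof only exhibits existence and leaves uniqueness implicit. What the paper's route buys is brevity and the geometric picture of $X$ as the $\frac{1}{n}$-scaling of the lattice points of $(n-1)AB$, which ties the corollary directly back to the segment argument of Theorem \ref{thm:minimal iff half}.
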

\begin{proof}
$X$ is a subset of the segment $\frac{n-1}{n}AB$, and therefore
$nX$ is a subset of the segment $\left(n-1\right)AB$. We find that
$nX$ is the set of the $n$ lattice-points on $\left(n-1\right)AB$.
Multiplying the point $D$ from the end of the proof of theorem \ref{thm:minimal iff half}
by~$n$ gives a lattice-point on the segment $\left(n-1\right)AB$.
We have shown that $nD\in nX$, and thus $D\in X$.
\end{proof}
This proof provides an explicit way of finding the lattice-points
of a lattice-polygon.
\begin{algorithm}
(Lattice-triangulation of a lattice-polygon $P$) Let $P$ be a lattice
polygon. We want to partition $P$ into minimal triangles. We make
a list of these triangles via the following steps:
\begin{enumerate}
\item Partition $P$ into triangles, by drawing lines between non-adjacent
vertices, without crossing any other line.
\item Choose one triangle, $T$. Choose one vertex of $T$, which we specify
as $C$.
\item Move $C$ to the origin, and denote the other vertices as $A=\left(a,c\right)$
and $B=\left(b,d\right)$.
\item If $a-c$ and $b-d$ are not coprime, then take $D=\frac{k-1}{k}A+\frac{1}{k}B$
for $k=\gcd\left(a-c,b-d\right)$. Partition $T$ into $T_{1}=\triangle ACD$
and $T_{2}=\triangle BCD$. Return both $T_{1}$ and $T_{2}$ to step
2.
\item If $a-c$ and $b-d$ are coprime and the area of $T$ is $\frac{n}{2}$
with $n>1$, take $D$ the one and only lattice-point in (\ref{eq:SegPts}).
Partition $T$ into $T_{1}=\triangle ABD$, $T_{2}=\triangle ACD$
and $T_{3}=\triangle BCD$. Return $T_{1}$, $T_{2}$ and $T_{3}$
to step 2.
\item If the area of $T$ is $\frac{1}{2}$, $T$ is minimal,and we add
$T$ to our list. If there are other triangles with area greater than
$\frac{1}{2}$, return them to step 2.
\item We have obtained a list of minimal triangles. This procedure must
terminate, since the number of such triangles is twice the area of
$P$.
\end{enumerate}
\end{algorithm}
We conclude the proof of Pick's Theorem (Theorem \ref{thm:Pick})
by proving that Pick's formula is additive.
\begin{proof}
\cite{Pick} Let $P$ be a lattice-polygon, and denote by $i$ and
$u$ the number of its interior points and boundary points, respectively.
We claim that Pick's formula, $i+\frac{u}{2}-1$, is additive under
triangulation, like the total area. Thus, we can triangulate $P$
into minimal triangles, and calculate that for a minimal triangle
$0+\frac{3}{2}-1=\frac{1}{2}$, and conclude the proof of Pick's Theorem.

If $i\neq0$, choose an interior point, $D$, and two boundary points
$A,B$. Partition the polygon $X$ into two polygons by drawing the
lines $AD$ and $BD$. Denote by $u_{1}$ and $u_{2}$ the number
of boundary points in these two polygons, and by $i_{1}$ and $i_{2}$
the number of their respective interior points. Denote by $d$ the
total number of lattice points on the segments $AD$ and $BD$ (count
$A$, $B$ and $D$ only once!). Clearly, $i=i_{1}+i_{2}+d-2$, since
$d$ counts the points $A$ and $B$, which are not interior points.
Furthermore, $u=u_{1}+u_{2}-2d+2$. We subtracted here the points
on the segments from both polygons, but added the points $A$ and
$B$. Thus,
\begin{eqnarray}
i+\frac{u}{2}-1 & = & i_{1}+i_{2}+d-2+\frac{u_{1}+u_{2}-2d+2}{2}-1\nonumber \\
 & = & \left(i_{1}+\frac{u_{1}}{2}-1\right)+\left(i_{2}+\frac{u_{2}}{2}-1\right).\label{eq:additivity}
\end{eqnarray}
We conclude that $i+\frac{u}{2}-1$ is preserved when partitioning
$P$ with respect to an interior point $D$.

If $i=0$, but $u>3$, choose two boundary points, $A$ and $B$,
and take $D$ to be the same as $A$. Equation (\ref{eq:additivity})
is true in this case as well.

If $i=0,u=3$, we can no longer partition that minimal triangle, but
by Theorem \ref{thm:minimal iff half}, we find that the area of this
polygon is $\frac{1}{2}=0+\frac{3}{2}-1=i+\frac{u}{2}-1$.

In conclusion, by decomposing $P$ by any point, interior or boundary,
the total area is the sum of the area of the two parts, and Pick's
formula for $P$ is the sum of Pick's formulas for them. Hence, we
find that both these quantities are additive. If these two quantities
coincide on minimal triangles, then by induction they coincide on
any lattice-polygon. Indeed, by Theorem \ref{thm:minimal iff half},
the area of each minimal triangle $T$ is $\frac{1}{2}=0+\frac{3}{2}-1=i_{T}+\frac{u_{T}}{2}-1$.
From additivity of both this formula and the concept of area, $i+\frac{u}{2}-1$
is the area of $P$.\end{proof}
\begin{acknowledgement*}
The author would like to thank Louis Rowen for his help, patience
and guidance.
\end{acknowledgement*}

\end{document}